\newcommand*\bigcdot{\mathpalette\bigcdot@{.5}}
\newcommand*\bigcdot@[2]{\mathbin{\vcenter{\hbox{\scalebox{#2}{$\m@th#1\bullet$}}}}}
\newtheorem{theorem}{Theorem}
\newtheorem{lemma}[theorem]{Lemma}
\newtheorem{proposition}[theorem]{Proposition}
\newtheorem{remark}[theorem]{Remark}
\newtheorem{example}[theorem]{Example}
\def\XXint#1#2#3{{\setbox0=\hbox{$#1{#2#3}{\int}$ }
\vcenter{\hbox{$#2#3$ }}\kern-.6\wd0}}
\newcommand{\bv}{\operatorname{BV}}
\newcommand{\di}{\operatorname{div}}
\newcommand{\dif}{\operatorname{d}\!}
\newcommand{\R}{\mathbb{R}}
\newcommand{\A}{\mathbb{B}}
\newcommand{\locc}{\operatorname{loc}}
\newcommand{\sobo}{\operatorname{W}}
\newcommand{\lebe}{\operatorname{L}}
\newcommand{\hold}{\operatorname{C}}
\newcommand{\D}{D}
\newcommand{\curl}{\operatorname{curl}}
\renewcommand{\leq}{\leqslant}
\newcommand{\imag}{\operatorname{i}}
\newcommand{\id}{\operatorname{Id}}
\newcommand{\e}{\operatorname{e}}
\renewcommand{\di}{\operatorname{div}}
\newcommand{\bmo}{\operatorname{BMO}}
\newcommand{\lin}{\operatorname{Lin}}
\renewcommand{\e}{\operatorname{e}}
\newcommand{\rank}{\operatorname{rank}}
  \noindent\textit{E-mail address}, B.~Rai\c{t}\u{a}: \texttt{raita@maths.ox.ac.uk}
\begin{document}
\title[$\lebe^1$--estimates for constant rank operators]{$\lebe^1$--estimates for constant rank operators}
\author[B. Rai\c{t}\u{a}]{Bogdan Rai\c{t}\u{a}}
\subjclass[2010]{Primary: 26D10; Secondary: 46E35 }
\keywords{Convolution operators, Canceling operators, Critical embeddings, Sobolev inequalities, $\lebe^1$--estimates.}
\begin{abstract}
We show that the inequality
\begin{align*}
\|D^{k-1}(u-\pi u)\|_{\lebe^{n/(n-1)}}\leq c\|\A(D) u\|_{\lebe^1}
\end{align*}
holds for vector fields $u\in\hold^\infty_c$ if and only if $\A$ is canceling. Here $\pi$ denotes the $\lebe^2$--orthogonal projection onto the kernel of the $k$--homogeneous differential operator $\A(D)$ of \emph{constant rank}. Other critical embeddings are established. 
\end{abstract}
\maketitle
Recently, \textsc{Van Schaftingen} proved in \cite{VS} that the surprising estimate
\begin{align}\label{eq:VS}
\|D^{k-1}u\|_{\lebe^{n/(n-1)}}\leq c\|\A(D) u\|_{\lebe^1}
\end{align}
for vector fields $u\in\hold^{\infty}_c$ holds if and only if the $k$--homogeneous differential operator $\A(D)$ with constant coefficients is elliptic and canceling. Here (overdetermined) ellipticity is defined by injectivity of the symbol map (Fourier multiplier) of $\A(D)$, whereas cancellation, for elliptic operators, can be defined as the property that the equation $\A(D)u=\delta_0w$ has no distributional solutions if $w\neq0$.

The result of \textsc{Van Schaftingen} can be viewed as a critical a priori estimate for linear elliptic equations on the Sobolev scale. This is in sharp contrast with the case $1<p<\infty$ of $\lebe^p$--estimates,
\begin{align}\label{eq:CZ}
\|D^ku\|_{\lebe^p}\leq c\|\A(D)u\|_{\lebe^p}
\end{align}
for $u\in\hold^\infty_c$, which is attributable to \textsc{Calder\'on} and \textsc{Zygmund} \cite{CZ}. On the other hand, the estimate~\eqref{eq:CZ} holds for $p=1$ only if $D^ku=L[\A(D)u]$ for all $u$, where $L$ is a linear map between finite dimensional spaces. In other words, only trivial estimates such as \eqref{eq:CZ} hold for $p=1$ \cite{Ornstein,KK}. 

The purpose of the present note is to relax the ellipticity assumption to the constant rank condition, which gives the largest class in which existent methods apply (see Remark~\ref{rk:non-CR}). Only relatively few examples outside this class are known and it is safe to say that developing a theory in this regime is a challenging problem (see Remark~\ref{rk:non-CR}). Of course, ellipticity is necessary for \eqref{eq:VS}, as is for \eqref{eq:CZ}, but as noted in \cite{SW} the inequality \eqref{eq:CZ} holds for $p=2$ and operators of constant rank if fewer competitor maps $u\in\hold^\infty_c$ are allowed (to be precise, if $u\in[\ker\A(D)]^\perp$). We will show that
\begin{align}\label{eq:BR}
\|D^{k-1}(u-\pi u)\|_{\lebe^{n/(n-1)}}\leq c\|\A(D) u\|_{\lebe^1}
\end{align}
holds for vector fields $u\in\hold^\infty_c$ if and only if $\A(D)$ is canceling. Here $\pi$ denotes the $\lebe^2$--projection onto $\ker\A$ (the complete statement as well as explicit definitions are given in Section~\ref{sec:statements}).

The proof follows from the fact that $u-\pi u$ can be written as a Fourier multiplier of $\A(D)u$ and of the dual estimate \cite[Thm.~1.4]{VS}. To apply the latter, we employ the recent construction of exact annihilators of constant rank operators \cite{R}. We do not claim that the ideas used are original, but we make a case of the fact that a considerable ammount of work has led to, among other developments, the characterization of operators that satisfy \eqref{eq:VS} \cite{BBM1,BBM2,BBM3,BB02,BBCR,BB04,BB07,VS-1,VS0,VS1,VS4} and to round off the theory with the case of constant rank operators seems welcome.

With similar methods, we obtain critical inequalities on the full Sobolev scale, fractional scales, Lorentz scale, Hardy and Hardy--Sobolev inequalities.
\section{The inequalities}\label{sec:statements}
\subsection{Definitions and notation}\label{sec:def}
Throughout this note we will work with a $k$--homogeneous linear differential operator with constant coefficients on $\R^n$ from $V$ to $W$,
i.e.,
\begin{align}\label{eq:A}
\A(D)u\coloneqq\sum_{|\alpha|=k}\A_\alpha \partial^\alpha u\qquad\text{ for }u\in\hold^\infty(\R^n,V),
\end{align}
where $\A_\alpha\in\lin(V,W)$ are linear maps for all multi--indices $\alpha$ with $|\alpha|=k$ and $V,\,W$ are finite dimensional inner product spaces. The symbol map of $\A(D)$ is a $\lin(V,W)$--valued homogeneous polynomial of degree $k$ given by
\begin{align*}
\A(\xi)=\sum_{|\alpha|=k}\xi^\alpha\A_\alpha\in\lin(V,W)\qquad\text{ for }\xi\in\R^n.
\end{align*}
The symbol map is defined such that $\widehat{\A(D)u}=(-\imag)^k\A(\cdot)\hat{u}$ for $u$ in the Schwartz space $\mathscr{S}(\R^n,V)$. Our convention for the Fourier transform is
\begin{align*}
\hat{u}(\xi)=\int_{\R^n}\e^{-\imag \xi\cdot x}u(x)\dif x.
\end{align*}

An operator $\A(D)$ is said to be of \emph{constant rank} if for some integer $r$,
\begin{align*}
\rank\A(\xi)=r\qquad\text{ for all }0\neq\xi \in\R^n.
\end{align*}
The constant rank condition was introduced in \cite{SW} to prove coerciveness estimates for non--elliptic systems and used in the theories of compensated compactness \cite{Murat} and lower semi--continuity \cite{FM}. It is well--known that the constant rank condition is equivalent with smoothness of the $(-k)$--homogenous map $\xi\mapsto\A^\dagger(\xi)$ away from zero (here $M^\dagger$ denotes the \emph{Moore--Penrose generalized inverse} of a matrix $M$; see \cite{PI} for a general presentation). This fact was recently improved in \cite{R}, where it is shown, as an immediate consequence of the result in \cite{Decell}, that $\A^{\dagger}(\cdot)\in\hold^\infty(\R^n\setminus\{0\},\lin(W,V))$ is a rational function.

An operator $\A(D)$ is said to be (overdetermined) \emph{elliptic} if $\A(D)$ is of full constant rank ($r=\dim V$); equivalently, the symbol map $\A(\xi)$ is injective for all $0\neq\xi\in\R^n$. An operator $\A(D)$ is said to be \emph{canceling} if
\begin{align*}
\bigcap_{\xi\in\mathbb{S}^{n-1}}	\mathrm{im\,}\A(\xi)=\{0\}.
\end{align*}
It follows from various considerations in \cite{VS} that, if $\A(D)$ is elliptic, then it is canceling if and only if $\A(D)u=\delta_0w$ implies $w=0$. The same holds true for constant rank operators:
\begin{lemma}\label{lem:canc}
Let $\A(D)$ as in \eqref{eq:A} have constant rank. Then $\A(D)$ is canceling if and only if for $u\in\lebe^1_{\locc}$ and $w\in W$,
\begin{align}\label{eq:canc_eq}
\A(D) u=\delta_0w\implies w=0.
\end{align}
Moreover, if $\A(D)$ is non--canceling, \eqref{eq:canc_eq} has a solution $u_h\in\hold^\infty(\R^n\setminus\{0\},V)$ such that $\D^lu_h$ is $(k-n-l)$--homogeneous for all $l\geq\max\{0,k-n+1\}$.
\end{lemma}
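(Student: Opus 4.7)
The plan is to treat both directions by Fourier analysis, relying on the smoothness and $(-k)$--homogeneity of the Moore--Penrose inverse $\A^\dagger(\xi)$ on $\R^n\setminus\{0\}$, as recalled just before the lemma.

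For the direction $(\Rightarrow)$, suppose $\A(D)$ is canceling and $\A(D)u=\delta_0 w$ for some $u\in\lebe^1_{\locc}$, which I may regard as a tempered distribution (say after cutting off away from the singularity). Taking the Fourier transform yields $(-\imag)^k\A(\xi)\hat{u}(\xi)=w$ as a tempered distribution on $\R^n$. On $\R^n\setminus\{0\}$ the matrix $I-\A(\xi)\A^\dagger(\xi)$ is the smooth orthogonal projection onto $(\mathrm{im\,}\A(\xi))^\perp$, and multiplying the equation by it on the left annihilates the left-hand side through the Moore--Penrose identity $\A\A^\dagger\A=\A$. Consequently $(I-\A(\xi)\A^\dagger(\xi))w=0$ for every $\xi\neq0$, i.e.\ $w\in\mathrm{im\,}\A(\xi)$ for all such $\xi$, so the canceling hypothesis forces $w=0$.

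For the direction $(\Leftarrow)$, I prove the contrapositive by constructing $u_h$. Assume $\A(D)$ is non-canceling and fix $0\neq w\in\bigcap_{\xi\neq 0}\mathrm{im\,}\A(\xi)$. The natural ansatz is to prescribe
\[
\hat{u}_h(\xi)=\imag^k\A^\dagger(\xi)w\qquad\text{on }\R^n\setminus\{0\},
\]
which is smooth, rational, and $(-k)$--homogeneous there; extend it to a tempered distribution on $\R^n$ and set $u_h:=\mathcal{F}^{-1}\hat{u}_h$. Since $w\in\mathrm{im\,}\A(\xi)$ implies $\A(\xi)\A^\dagger(\xi)w=w$, the identity $(-\imag)^k\A(\xi)\hat{u}_h(\xi)=w$ holds on $\R^n\setminus\{0\}$, so $\A(D)u_h-\delta_0 w$ has Fourier transform supported at $\{0\}$, hence is a polynomial in $x$. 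This polynomial can then be absorbed by adding to $u_h$ a suitable polynomial correction, producing an honest solution of $\A(D)u_h=\delta_0 w$.

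The regularity and homogeneity claims follow from the standard Fourier correspondence for homogeneous distributions smooth away from the origin: since $\hat{u}_h$ is smooth and $(-k)$--homogeneous on $\R^n\setminus\{0\}$, $u_h$ is smooth on $\R^n\setminus\{0\}$ and, modulo a polynomial correction of degree at most $k-n$ (arising only when $k\geq n$ from the nonuniqueness of the distributional extension across the origin), homogeneous of degree $k-n$ there. Differentiating $l\geq\max\{0,k-n+1\}$ times eliminates any such polynomial ambiguity and returns the stated $(k-n-l)$--homogeneity; for $k<n$ the ambiguity does not arise and $u_h$ itself is $(k-n)$--homogeneous. The bound $|u_h(x)|\lesssim|x|^{k-n}$ near the origin together with $k\geq 1$ gives $u_h\in\lebe^1_{\locc}$. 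The main obstacle I anticipate is precisely the distributional extension of $\imag^k\A^\dagger(\xi)w$ across the origin when $k\geq n$, where $\A^\dagger$ fails to be locally integrable and different extensions differ by derivatives of $\delta_0$; the technical heart of the argument is to verify that the extension can always be selected (or the polynomial ambiguity corrected) so that $\A(D)u_h=\delta_0 w$ holds as a distributional identity on $\R^n$, while simultaneously preserving smoothness on $\R^n\setminus\{0\}$ and the advertised homogeneity of $D^l u_h$.
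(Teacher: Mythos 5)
Your $(\Leftarrow)$ direction and the construction of $u_h$ are essentially the paper's approach (cf.\ the footnote in the proof of Lemma~\ref{lem:rhoKw}, where $u_h=Kw$ with $\hat K=\A^\dagger$), and you correctly flag the real technical point: extending the $(-k)$--homogeneous symbol $\A^\dagger(\cdot)w$ across $\xi=0$ when $k\geq n$ produces a polynomial ambiguity of degree $\leq k-n$. What you defer as the ``technical heart'' is in fact harmless and is resolved exactly as you guess: any two extensions differ in Fourier space by a distribution supported at $\{0\}$ of order $\leq k-n$, i.e.\ on the physical side by a polynomial of degree $\leq k-n<k$, which $\A(D)$ annihilates (it is $k$--homogeneous), so $\A(D)u_h=\delta_0w$ holds for \emph{every} admissible extension; and differentiating $l\geq\max\{0,k-n+1\}$ times kills the polynomial (and logarithmic) ambiguity, giving the claimed homogeneity of $D^lu_h$. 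A minor slip: near the origin $u_h$ behaves like $\log|x|$ or a polynomial (not $|x|^{k-n}$) when $k\geq n$, but this is still locally integrable.

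The $(\Rightarrow)$ direction, however, has a genuine gap. A function $u\in\lebe^1_{\locc}$ need not be a tempered distribution (e.g.\ $u(x)=\e^{x_2^2}$ solves $\partial_1u=0=\delta_0\cdot 0$ on $\R^2$), so $\hat u$ is simply undefined and you cannot write $(-\imag)^k\A(\xi)\hat u(\xi)=w$. Your parenthetical fix via a cutoff does not repair this: if $\rho\in\hold^\infty_c$ with $\rho\equiv1$ near $0$, then $\A(D)(\rho u)=\delta_0w+R$ with $R$ a nonzero compactly supported distribution on $\{D\rho\neq0\}$, so on the Fourier side $(-\imag)^k\A(\xi)\widehat{\rho u}(\xi)=w+\hat R(\xi)$, and the projection argument only yields $(\id_W-\A(\xi)\A^\dagger(\xi))\bigl(w+\hat R(\xi)\bigr)=0$ for $\xi\neq0$, not $(\id_W-\A(\xi)\A^\dagger(\xi))w=0$. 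Moreover, $\id_W-\A(\cdot)\A^\dagger(\cdot)$ is a zero--homogeneous \emph{rational} function, not a polynomial, so multiplying $\hat u$ by it is not the symbol of any differential operator and cannot be moved to the spatial side. This is precisely where the paper's route (following \cite[Lem.~2.5]{Rdiff} with the annihilator from \cite[Rk.~3]{R}, i.e.\ Lemma~\ref{lem:annihilator} above) is more robust: $\mathcal{A}(\xi)=p(\xi)\bigl(\id_W-\A(\xi)\A^\dagger(\xi)\bigr)$ \emph{is} a polynomial, hence $\mathcal{A}(D)$ is a bona fide differential operator that acts on any distribution, tempered or not, and $\mathcal{A}(\xi)\A(\xi)\equiv0$ gives $\mathcal{A}(D)\A(D)\equiv0$ as operators. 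Applying $\mathcal{A}(D)$ to $\A(D)u=\delta_0w$ therefore gives $\mathcal{A}(D)(\delta_0w)=0$ directly; by linear independence of $\{\partial^\alpha\delta_0\}_{|\alpha|=m}$ this forces $\mathcal{A}(\xi)w=0$ for all $\xi$, i.e.\ $w\in\bigcap_{\xi\neq0}\ker\mathcal{A}(\xi)=\bigcap_{\xi\neq0}\mathrm{im\,}\A(\xi)$, and cancellation yields $w=0$. No Fourier transform of $u$ is ever taken.
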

This fact follows just as in \cite[Lem.~2.5]{Rdiff}, with the modification that $\mathcal{A}$ in the respective proof should be replaced with $\mathcal{A}$ as constructed in \cite[Rk.~3]{R}. We will return to this point in detail later.

Schematically, the strategy in \cite{VS} for elliptic and canceling $\A(D)$ can be summarized as follows:
\begin{enumerate}
\item\label{it:step1} By ellipticity, write $u=K\star (\A(D)u)$ where $\hat{K}=\A^\dagger(\cdot)$ \cite[Lem.~2.1]{BVS};
\item\label{it:step2} Use an elliptic estimate (in principle, boundedness of singular integral operators) to obtain
\begin{align*}
\|u\|_X\leq c\|\A(D) u\|_Y \qquad\text{ for }u\in\hold^\infty_c(\R^n,V).
\end{align*}
For example, in the case of \eqref{eq:VS},  $X=\dot{\sobo}{^{k-1,n/(n-1)}}$ and $Y=\dot{\sobo}{^{-1,n/(n-1)}}$.
\item\label{it:step3} Use a dual, critical $\lebe^1$--estimate
\begin{align*}
\|w\|_{Y}\leq c\|w \|_{\lebe^1}
\end{align*}
for vector fields $w\in\hold^\infty_c(\R^n,W)$ in the kernel of an underdetermined differential operator, $\mathcal{A}(D)w=0$ (see \cite[Thm.~1.4]{VS}).
\end{enumerate}
The operators that satisfy \ref{it:step3} are said to be \emph{cocanceling} and are defined by
\begin{align*}
\bigcap_{\xi\in\mathbb{S}^{n-1}}	\ker\mathcal{A}(\xi)=\{0\}.
\end{align*}
For applicability of \ref{it:step3} to improve the estimate in \ref{it:step2} with $w=\A u$, we want
\begin{align}\label{eq:exact}
\mathrm{im\,}\A(\xi)=\ker\mathcal{A}(\xi)\qquad\text{ for all }0\neq\xi\in\R^n,
\end{align}
in which case we say that the differential operator $\mathcal{A}(D)$ is an exact annihilator of $\A(D)$. Given a differential operator $\A(D)$, a naive way to construct $\mathcal{A}(D)$ would be to set
\begin{align*}
\mathcal{A}(\xi)=\mathrm{Proj}_{[\mathrm{im\,}\A(\xi)]^\perp}=\id_W-\A(\xi)\A^{\dagger}(\xi)\eqqcolon \mathcal{P}(\xi),
\end{align*}
but this expression does not define a differential operator in general. However, if $\A(D)$ is elliptic, one can explicitly compute that $\A^\dagger(\xi)=[\A^*(\xi)\A(\xi)]^{-1}\A^*(\xi)$, so that setting
\begin{align*}
\mathcal{A}(\xi)\coloneqq \det[\A^*(\xi)\A(\xi)]\mathcal{P}(\xi)
\end{align*}
defines a polynomial, and hence $\mathcal{A}(D)$ thus defined is a homogeneous linear differential operator \cite[Sec.~4.2]{VS}.

If we only assume that $\A(D)$ has constant rank, one still has that $\A^\dagger(\xi)=p(\xi)^{-1}\mathcal{Q}(\xi)$, where $p$ is a scalar--valued homogeneous polynomial with no non--zero roots in $\R^n$ and $\mathcal Q$ is a $\lin(W,V)$--valued polynomial. In particular,
\begin{align*}
\mathcal{A}(\xi)\coloneqq p(\xi)\mathcal{P}(\xi)
\end{align*}
defines a linear differential operator that satisfies \eqref{eq:exact}. Collecting, we proved that:
\begin{lemma}[{\cite[Rk.~4]{R}}]\label{lem:annihilator}
Let $\A(D)$ be a constant rank operator as in \eqref{eq:A}. Then there exists a constant rank, homogeneous, linear differential operator $\mathcal{A}(D)$ with constant coefficients such that the exact relation \eqref{eq:exact} holds.
\end{lemma}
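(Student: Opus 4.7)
My plan is to make the sketch in the paragraph preceding the lemma rigorous. The essential input, which I take as a black box from \cite{R,Decell}, is that for a constant rank operator $\A(D)$ the Moore--Penrose pseudoinverse of the symbol can be written as
\begin{align*}
\A^\dagger(\xi)=p(\xi)^{-1}\mathcal{Q}(\xi)\qquad\text{for }\xi\in\R^n\setminus\{0\},
\end{align*}
where $p$ is a scalar homogeneous polynomial with no non-zero real roots and $\mathcal{Q}$ is a $\lin(W,V)$-valued homogeneous polynomial of complementary degree, so that the right-hand side is $(-k)$-homogeneous.

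Given this, I would define
\begin{align*}
\mathcal{A}(\xi)\coloneqq p(\xi)\mathcal{P}(\xi)=p(\xi)\id_W-\A(\xi)\mathcal{Q}(\xi),
\end{align*}
the key algebraic point being that the right-hand side is manifestly a $\lin(W,W)$-valued polynomial in $\xi$. Since $\mathcal{P}$ is $0$-homogeneous on $\R^n\setminus\{0\}$ and $p$ is homogeneous of some degree $d$, the polynomial $\mathcal{A}$ is homogeneous of degree $d$, and therefore $\mathcal{A}(D)$ is a $d$-homogeneous linear differential operator with constant coefficients from $W$ to $W$.

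It remains to verify the two properties claimed. For $0\neq\xi$, the scalar $p(\xi)$ is non-zero, so $\ker\mathcal{A}(\xi)=\ker\mathcal{P}(\xi)$. Because $\mathcal{P}(\xi)$ is the orthogonal projection of $W$ onto $[\mathrm{im\,}\A(\xi)]^\perp$, one has $\ker\mathcal{P}(\xi)=\mathrm{im\,}\A(\xi)$, which is precisely the exact relation \eqref{eq:exact}. The same observation gives $\rank\mathcal{A}(\xi)=\rank\mathcal{P}(\xi)=\dim W-r$ for all $\xi\neq 0$, so $\mathcal{A}(D)$ is of constant rank.

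The genuine mathematical content is hidden in the rational representation of $\A^\dagger$ with a denominator that has no non-zero real roots; once that is granted, the lemma reduces to the elementary observation that multiplying the orthogonal projection $\mathcal{P}(\xi)$ by the scalar polynomial $p(\xi)$ clears the denominator and produces a bona fide polynomial symbol with the required properties. Accordingly, I do not anticipate any serious obstacle beyond invoking the cited construction from \cite{R,Decell}.
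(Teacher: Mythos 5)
Your proof is correct and follows the same route as the paper: invoke the rational representation $\A^\dagger(\xi)=p(\xi)^{-1}\mathcal{Q}(\xi)$ from \cite{R,Decell} and clear the denominator in $\mathcal{P}(\xi)=\id_W-\A(\xi)\A^\dagger(\xi)$ to obtain the polynomial symbol $\mathcal{A}(\xi)=p(\xi)\mathcal{P}(\xi)$. You are slightly more explicit than the paper in checking that $\mathcal{A}(D)$ is itself of constant rank (via $\rank\mathcal{A}(\xi)=\rank\mathcal{P}(\xi)=\dim W-r$) and homogeneous, but the argument is the same.
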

Lemma~\ref{lem:annihilator} deals with the duality of estimates \ref{it:step2} and \ref{it:step3} (see also Remark~\ref{rk:duality}). On the other hand, when the ellipticity assumption in \ref{it:step1} is relaxed to the constant rank condition, the convolution $K\star(\A(D)u)$ is no longer a representation of a Schwartz function $u$. Instead, we can compute in frequency space:
\begin{align*}
\A^{\dagger}(\xi)\widehat{\A(D)u}(\xi)&=\A^\dagger(\xi)\A(\xi)\hat{u}(\xi)=\hat{u}(\xi)-\left[\id_V-\A^\dagger(\xi)\A(\xi)\right]\hat{u}(\xi)\\
&=\hat{u}(\xi)-\mathrm{Proj}_{\ker\A(\xi)}\hat{u}(\xi),
\end{align*}
so that we actually have
\begin{align}\label{eq:defs}
u-\pi u=K\star (\A(D)u)\quad\text{ where }\quad\hat{K}=\A^\dagger(\cdot)\quad\text{ and }\quad\hat{\pi}=
\id_V-\A^\dagger(\cdot)\A(\cdot).
\end{align}
In fact, it is easy to see that, since $\xi\mapsto \id_V-\A^\dagger(\xi)\A(\xi)$ is zero--homogeneous, $\pi$ maps $\mathscr{S}(\R^n,V)$ into $\mathscr{S}(\R^n,V)$. It can also be checked by direct computation that $\pi$ is the $\lebe^2$--orthogonal projection onto $\ker\A\cap\mathscr{S}$. 

Of course, if $\A(D)$ is elliptic, then $\pi\equiv0$, so all our inequalities for constant rank operators exactly reduce to the known existing inequalities for elliptic operators.

Finally, we refer the reader to \cite{Triebel} for background on Besov and Triebel--Lizorkin spaces and to \cite{Ziemer} for background on Lorentz spaces.
\subsection{The statements}\label{sec:statements(for_real)}
The considerations in Section~\ref{sec:def} are sufficient so that we need not write complete proofs. All necessary details can be found in \cite{VS,BVS,Rdiff} and the references therein. We will indicate which result should be quoted for each statement. We use the notation in \eqref{eq:defs}.
\begin{theorem}[$\lebe^p$--scales]\label{thm:Lp}
Let $\A(D)$ be a constant rank operator as in \eqref{eq:A} and $n>1$. Then the estimate
\begin{align*}
\|u-\pi u\|_{X}\leq c\|\A(D)u\|_{\lebe^1}\qquad\text{ for all }u\in\hold^\infty_c(\R^n,V)
\end{align*}
holds if and only if $\A(D)$ is canceling. Here $X$ is one of the following:
\begin{enumerate}
\item\label{it:Lp-sobo} $X=\dot{\sobo}{^{k-j,n/(n-j)}}$ for some $1\leq j \leq \min\{k,n-1\}$.
\item\label{it:Lp-frac} $X=\dot{\sobo}{^{k-s,n/(n-s)}}$ for some $s\in(0,n)$.
\item\label{it:Lp-beso} $X=\dot{\mathrm{B}}{^{k-s,n/(n-s)}_q}$ for some $s\in(0,n)$, $1< q < \infty$.
\item\label{it:Lp-trli} $X=\dot{\mathrm{F}}{^{k-s,n/(n-s)}_q}$ for some $s\in(0,n)$, $1\leq q\leq \infty$.
\item\label{it:Lp-lore} $X=\dot{\sobo}{^{k-j}\lebe^{n/(n-j),q}}$ for some $1\leq j \leq \min\{k,n-1\}$, $1<q<\infty$.
\end{enumerate}
\end{theorem}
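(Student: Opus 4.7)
My plan is to follow the three-step scheme outlined in \ref{it:step1}--\ref{it:step3}: step \ref{it:step1} is replaced by the identity $u - \pi u = K \star \A(D)u$ from \eqref{eq:defs}; step \ref{it:step3} is provided by Lemma~\ref{lem:annihilator} combined with \textsc{Van Schaftingen}'s critical $\lebe^1$-estimate \cite[Thm.~1.4]{VS}; and step \ref{it:step2} reduces to standard Mikhlin multiplier theory on each of the five scales. With this structure in hand, sufficiency in all five cases can be handled by a single composition of a regularity-raising multiplier bound with the critical $\lebe^1$-estimate on the cocanceling target.

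For sufficiency, I would assume $\A(D)$ is canceling, fix $u \in \hold^\infty_c(\R^n, V)$, and set $f \coloneqq \A(D)u$. By \eqref{eq:defs}, $u - \pi u = K \star f$ with Fourier multiplier $\hat{K} = \A^\dagger$, a symbol that is $(-k)$-homogeneous and $\hold^\infty$ on $\R^n \setminus \{0\}$. The Mikhlin--H\"ormander theorem, together with its Besov, Triebel--Lizorkin and Lorentz extensions via Littlewood--Paley decomposition and real interpolation, shows that convolution with $K$ maps $Y$ boundedly into $X$ in each of the cases \ref{it:Lp-sobo}--\ref{it:Lp-lore}, where $Y$ is the space obtained from $X$ by shifting differentiability down by $k$. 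The constraint $j \leq n-1$ in cases \ref{it:Lp-sobo} and \ref{it:Lp-lore} is precisely what keeps the target exponent $n/(n-j)$ finite. By Lemma~\ref{lem:annihilator} there is an exact cocanceling annihilator $\mathcal{A}(D)$ with $\mathcal{A}(D) f \equiv 0$, so \cite[Thm.~1.4]{VS} and its analogues on the remaining four scales give $\|f\|_Y \leq c \|f\|_{\lebe^1}$. Composing the two bounds will deliver the claim.

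For necessity, I would argue by contradiction. If $\A(D)$ is not canceling, then Lemma~\ref{lem:canc} supplies $w \in W \setminus \{0\}$ and $u_h \in \hold^\infty(\R^n \setminus \{0\}, V)$ solving $\A(D) u_h = \delta_0 w$, with $D^l u_h$ positively $(k-n-l)$-homogeneous in the stated range. I would fix a cutoff $\chi \in \hold^\infty_c(\R^n)$ with $\chi \equiv 1$ near $0$, a standard mollifier $\rho_\epsilon$, and test the inequality against $u_\epsilon \coloneqq \rho_\epsilon \star (\chi u_h) \in \hold^\infty_c$. Since $\A(D)(\chi u_h) = \delta_0 w + R$ for some $R \in \hold^\infty_c$ supported where $\chi$ varies, $\|\A(D) u_\epsilon\|_{\lebe^1}$ remains bounded uniformly in $\epsilon$. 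On the other hand, each of the critical norms listed in \ref{it:Lp-sobo}--\ref{it:Lp-lore} is precisely at the borderline of integrability for derivatives of the $(k-n)$-homogeneous profile $u_h - \pi u_h$ near the origin, producing a logarithmic divergence of $\|u_\epsilon - \pi u_\epsilon\|_X$ as $\epsilon \searrow 0$, a contradiction.

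The only obstacle I anticipate is superficial: tracking \textsc{Van Schaftingen}'s critical estimate through the five scales \ref{it:Lp-sobo}--\ref{it:Lp-lore}, each of which has already been treated in the elliptic case in \cite{VS,BVS}. The genuine difficulty has been resolved by Lemma~\ref{lem:annihilator} from \cite{R}, which produces the exact cocanceling annihilator $\mathcal{A}(D)$ in the constant rank regime; once this annihilator and the identity \eqref{eq:defs} are available, the constant rank case is no harder than the elliptic one.
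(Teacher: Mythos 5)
Your proposal follows the paper's strategy closely on both sides, and the sufficiency argument is essentially identical to the one in the paper: the representation $u - \pi u = K \star \A(D)u$ replaces step \ref{it:step1}, the homogeneous smooth symbol $\A^\dagger$ supplies the multiplier bound for step \ref{it:step2}, and Lemma~\ref{lem:annihilator} plus the cocanceling estimates of \cite{VS} furnish step \ref{it:step3}. For necessity the test function is also the same (a localized fundamental solution), but your argument glosses over the one point where the constant-rank setting requires genuine care. You write that the $(k-j)$-th derivatives of $u_h - \pi u_h$ carry a borderline singularity and then assert that $\|u_\epsilon - \pi u_\epsilon\|_X$ blows up for $u_\epsilon = \rho_\epsilon \star (\chi u_h)$; but $\pi$ is a nonlocal Fourier multiplier, so it is not automatic that after cutting off and mollifying, the quantity $u_\epsilon - \pi u_\epsilon$ retains the same singularity near the origin. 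The paper resolves this in Lemma~\ref{lem:rhoKw} by computing, via the very identity $u - \pi u = K \star \A(D)u$, that
\begin{align*}
\rho K w - \pi(\rho K w) = K \star (\delta_0 w + \psi) = Kw + \Psi
\end{align*}
with $\Psi = K \star \psi$ smooth, so the singular part survives localization exactly. Only then does the paper run the distribution-function computation showing that the $(j-n)$-homogeneous field $D^{k-j}Kw$, bounded below on a cone, lies outside $\lebe^{n/(n-j),q}(B)$ for $q<\infty$ (a direct power-law failure of the Lorentz norm, not a logarithmic divergence of an approximating sequence; and for the Besov and Triebel--Lizorkin scales the paper outsources the failure to \cite[Prop.~8.19--21]{VS} rather than re-deriving it). Supplying the analogue of \eqref{eq:conv_legit} would close the gap and make your necessity argument rigorous.
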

The limiting case $s=0$ is ruled out by Ornstein's Non--inequality. Theorem~\ref{thm:Lp}\ref{it:Lp-sobo} can be refined on a Hardy--Sobolev scale (cp.~\cite[Prop.~2.3]{BVS}):
\begin{theorem}[Hardy--type inequalities]\label{thm:hardy}
Let $\A(D)$ be a constant rank operator as in \eqref{eq:A}, $n>1$, $1\leq j\leq \min\{n-1,k\}$, and $q\in[1,n/(n-j)]$. Then the estimate
\begin{align*}
\left(\int_{\R^n}\left(\frac{|D^{k-j}(u-\pi u)(x)|}{|x|^{n/q-(n-j)}}\right)^q\dif x\right)^{1/q}\leq c\int_{\R^n}|\A(D)u(x)|\dif x\quad\text{ for }u\in\hold^\infty_c(\R^n,V)
\end{align*}
holds if and only if $\A(D)$ is canceling.
\end{theorem}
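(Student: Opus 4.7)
The plan is to run the three step program laid out in Section~1 but with a weighted target norm, reducing the theorem to the Hardy--type $\lebe^1$ estimate for cocanceling operators from \cite[Prop.~2.3]{BVS}. Starting from the identity \eqref{eq:defs} and differentiating $k-j$ times, one has
\begin{align*}
D^{k-j}(u-\pi u)=K_{j}\star\A(D)u,\qquad\widehat{K_{j}}(\xi)=(-\imag\xi)^{k-j}\A^{\dagger}(\xi),
\end{align*}
and the right-hand side symbol is, by the rational form of $\A^{\dagger}(\cdot)$ recalled in Section~\ref{sec:def}, a $(-j)$--homogeneous $\hold^{\infty}(\R^{n}\setminus\{0\})$ matrix field. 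Hence $K_{j}$ is represented by a classical Riesz--type kernel of order $j$ satisfying $|K_{j}(x)|\leq c|x|^{j-n}$. By Lemma~\ref{lem:annihilator} there is a constant--rank operator $\mathcal{A}(D)$ with $\ker\mathcal{A}(\xi)=\mathrm{im\,}\A(\xi)$, so that $\mathcal{A}(D)$ is cocanceling precisely when $\A(D)$ is canceling. Setting $w\coloneqq\A(D)u\in\ker\mathcal{A}(D)$, the desired estimate is exactly
\begin{align*}
\left\|\frac{K_{j}\star w}{|x|^{n/q-(n-j)}}\right\|_{\lebe^{q}}\leq c\|w\|_{\lebe^{1}},
\end{align*}
which is the content of \cite[Prop.~2.3]{BVS} once the Riesz--type behaviour of $K_{j}$ and the cocancellation of $\mathcal{A}(D)$ are in hand.

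For the necessity direction, I would assume $\A(D)$ to be non--canceling and invoke Lemma~\ref{lem:canc} to obtain a fundamental--solution--type field $u_{h}\in\hold^{\infty}(\R^{n}\setminus\{0\},V)$ with $\A(D)u_{h}=\delta_{0}w_{0}$, $w_{0}\neq 0$, and $D^{k-j}u_{h}$ exactly $(j-n)$--homogeneous; the admissibility $k-j\geq\max\{0,k-n+1\}$ is guaranteed by $1\leq j\leq n-1$. Mollifying and truncating, $u_{\varepsilon}\coloneqq\chi(u_{h}\star\rho_{\varepsilon})\in\hold^{\infty}_{c}$ yields $\|\A(D)u_{\varepsilon}\|_{\lebe^{1}}\leq C|w_{0}|$ uniformly in $\varepsilon$, while $D^{k-j}(u_{\varepsilon}-\pi u_{\varepsilon})$ converges outside the origin to a non--trivial $(j-n)$--homogeneous function, against which the weight $|x|^{-(n/q-(n-j))}$ is scale--critical: the integrand reduces to $|x|^{-n}$, so the $\lebe^{q}$ norm over annuli $\{\varepsilon\leq|x|\leq 1\}$ grows like $|\log\varepsilon|^{1/q}$, contradicting the putative uniform bound as $\varepsilon\searrow 0$.

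The main obstacle I anticipate is controlling the subtracted term $\pi u_{\varepsilon}$ in the necessity argument, since a priori the $\lebe^{2}$--projection onto $\ker\A(D)$ could, in principle, absorb the singular contribution driving the blow--up. This is ruled out structurally: $\pi u_{h}$ lies in $\ker\A(D)$ whereas $u_{h}$ does not (its image under $\A(D)$ is $\delta_{0}w_{0}\neq 0$), and $\pi$ is a zero--order Fourier multiplier with smooth symbol away from the origin, so $D^{k-j}\pi u_{h}$ is strictly less singular at $0$ than $D^{k-j}u_{h}$. The remaining check is quantitative: verifying that on the relevant dyadic annuli the contribution of $D^{k-j}\pi u_{\varepsilon}$ is dominated by a multiple of the leading $(j-n)$--homogeneous singularity of $D^{k-j}u_{\varepsilon}$, which follows from standard singular integral estimates on the symbol $\id_{V}-\A^{\dagger}(\xi)\A(\xi)$, after which the logarithmic divergence above concludes the argument.
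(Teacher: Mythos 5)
Your sufficiency argument matches the paper's: differentiate the Fourier representation \eqref{eq:defs}, observe that $D^{k-j}K$ is a Riesz--type kernel of order $j$, produce the exact annihilator $\mathcal{A}(D)$ via Lemma~\ref{lem:annihilator}, and invoke \cite[Prop.~2.3]{BVS} (the paper phrases this as ``inserting $G=K$'' into that proof, which in turn requires the constant--rank version of \cite[Prop.~2.2]{BVS}, again supplied by $\mathcal{A}(D)$). That direction is fine.

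The necessity direction contains a genuine gap, and you correctly identified where it is but proposed a fix that does not work. You claim that because $\pi$ is a zero--order Fourier multiplier with symbol $\id_V-\A^\dagger(\xi)\A(\xi)$ smooth away from the origin, $D^{k-j}\pi u_h$ must be ``strictly less singular'' at $0$ than $D^{k-j}u_h$. This is false: a zero--homogeneous Calder\'on--Zygmund multiplier does not improve the order of a local singularity. Applied to a $(j-n)$--homogeneous distribution it generically produces another $(j-n)$--homogeneous singularity (possibly with a logarithm), so ``standard singular integral estimates on the symbol'' give you no gain in local regularity, and the cancellation you fear between $D^{k-j}u_\varepsilon$ and $D^{k-j}\pi u_\varepsilon$ is not ruled out by this reasoning. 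The paper's Lemma~\ref{lem:rhoKw} resolves exactly this: taking $u=\rho Kw$ (truncate \emph{first}, then mollify) one gets the algebraic identity $\rho Kw-\pi(\rho Kw)=K\star\A(D)(\rho Kw)=K\star(\delta_0 w+\psi)=Kw+\Psi$ with $\psi\in\hold^\infty_c$ supported in an annulus away from the origin and hence $\Psi=K\star\psi\in\hold^\infty(\R^n,V)$. This makes the singular part of $D^{k-j}(u-\pi u)$ near $0$ \emph{exactly} equal to $D^{k-j}Kw$, up to a globally smooth error, and the weighted integral $\int_{B\cap\mathcal{C}}|x|^{q(j-n)}|x|^{-(n-q(n-j))}\,\dif x=\int_{B\cap\mathcal{C}}|x|^{-n}\,\dif x=\infty$ then gives the contradiction. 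Without this exact cancellation identity your blow--up computation is not justified. (As a secondary point: your order of operations, mollify then truncate by $\chi$, also does not immediately give the clean structure $\delta_0 w+\psi$ that makes the lemma go through; the paper's order is what makes $\A(D)(\rho Kw)$ a compactly supported measure with an explicit Dirac part.)
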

It was already noticed in \cite{Rdiff} for elliptic operators that, in  the limiting cases $s= n$ of Theorem~\ref{thm:Lp}\ref{it:Lp-frac},~\ref{it:Lp-beso}, $j=n$ of Theorem~\ref{thm:Lp}\ref{it:Lp-sobo} and Theorem~\ref{thm:hardy}   (which only makes sense if $q=\infty$), the canceling condition should be replaced by a strictly weaker condition \cite[Eq.~(1.7)]{Rdiff}. In the present context, we extend \cite[Thm.~1.3]{Rdiff}:
\begin{theorem}[$\lebe^\infty$--estimate]\label{thm:Linfty}
Let $\A(D)$ be a constant rank operator as in \eqref{eq:A} and $k\geq n\geq1$. Then the estimate
\begin{align*}
\|D^{k-n}(u-\pi u)\|_{\lebe^\infty}\leq c\|\A u\|_{\lebe^1}\qquad\text{ for all }u\in\hold^\infty_c(\R^n,V)
\end{align*}
holds if and only if $\A(D)$ satisfies
\begin{align}\label{eq:mist_cond2}
\int_{\mathbb{S}^{n-1}}\A^{\dagger}(\xi)w\otimes^{k-n}\xi\dif\mathscr{H}^{n-1}(\xi)=0\quad\text{ for all }\quad w\in\bigcap_{\xi\in\mathbb{S}^{n-1}}\mathrm{im\,}\A(\xi).
\end{align}
\end{theorem}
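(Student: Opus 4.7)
The plan is to extend the elliptic argument of \cite[Thm.~1.3]{Rdiff} to the constant-rank setting by combining the Fourier representation \eqref{eq:defs} with the exact annihilator furnished by Lemma~\ref{lem:annihilator}. From \eqref{eq:defs} one would first write
\[
D^{k-n}(u-\pi u) = K\star \A(D)u, \qquad \widehat{K}(\xi) = c_{k,n}\,\xi^{\otimes(k-n)}\otimes\A^{\dagger}(\xi) =: m(\xi),
\]
for a nonzero constant $c_{k,n}$, with $m$ smooth and positively $(-n)$-homogeneous on $\R^{n}\setminus\{0\}$. Standard theory of tempered distributions at this critical homogeneity yields a representation, on $\R^n\setminus\{0\}$ and modulo polynomials of degree strictly less than $k$,
\[
K(x) = L\,\log|x| + G\bigl(x/|x|\bigr),
\]
with $G$ smooth on $\mathbb{S}^{n-1}$ and $L$ a constant tensor proportional to the spherical mean in \eqref{eq:mist_cond2}. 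The polynomial ambiguity is harmless, since $\int p\cdot\A(D)u\,\dif x = 0$ for every polynomial $p$ with $\deg p < k$.

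For sufficiency, assuming \eqref{eq:mist_cond2}, I would decompose $\A(D)u = \Pi_{W_0}\A(D)u + \Pi_{W_0^\perp}\A(D)u$ along $W = W_0 \oplus W_0^\perp$, where $W_0 := \bigcap_{\xi}\mathrm{im\,}\A(\xi)$. For the first piece, $L|_{W_0}\equiv 0$ by hypothesis, so the effective kernel is the bounded $G(x/|x|)$ and $\|K\star\Pi_{W_0}\A(D)u\|_{\lebe^\infty}\lesssim \|G\|_{\lebe^\infty(\mathbb{S}^{n-1})}\|\A(D)u\|_{\lebe^1}$. For the second piece, Lemma~\ref{lem:annihilator} and the inclusion $W_0 \subset \ker\mathcal{A}(\xi)$ for every $\xi$ yield $\mathcal{A}(D)\Pi_{W_0^\perp}\A(D)u = 0$, and the restricted operator $\mathcal{A}(D)|_{W_0^\perp}$ is cocanceling because $\bigl(\bigcap_{\xi}\ker\mathcal{A}(\xi)\bigr)\cap W_0^\perp = W_0 \cap W_0^\perp = \{0\}$. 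One then invokes \cite[Thm.~1.4]{VS} to represent $\Pi_{W_0^\perp}\A(D)u$ as a divergence of a function in $\lebe^{n/(n-1)}$, transfers the derivative onto the log kernel via integration by parts, and obtains a convolution with a kernel of homogeneity $-1$ against an $\lebe^{n/(n-1)}$ function, yielding the desired uniform bound at the Hardy--Littlewood--Sobolev endpoint.

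For necessity I would argue by contradiction: if \eqref{eq:mist_cond2} fails, there is $w\in W_0$ with $Lw\neq 0$, and Lemma~\ref{lem:canc} provides a homogeneous solution $u_h$ of $\A(D)u_h = \delta_0 w$; localizing and mollifying $u_h$ as in \cite[Sec.~5]{Rdiff} produces a sequence $(u_\varepsilon)\subset\hold^\infty_c$ with $\|\A(D)u_\varepsilon\|_{\lebe^1}$ bounded and $\|D^{k-n}(u_\varepsilon-\pi u_\varepsilon)\|_{\lebe^\infty}$ diverging logarithmically, contradicting the estimate. The main obstacle is the sufficiency step for the $W_0^\perp$ component: the endpoint Hardy--Littlewood--Sobolev estimate naively places the output only in $\bmo$ rather than $\lebe^\infty$, so the proof must exploit the cocanceling structure of $\mathcal{A}(D)|_{W_0^\perp}$ together with the vanishing moments of $\A(D)u$ to recover a genuine uniform bound. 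Once this is carried out for the elliptic case in \cite[Thm.~1.3]{Rdiff}, the passage to constant rank amounts essentially to substituting Lemma~\ref{lem:annihilator} for the classical elliptic annihilator.
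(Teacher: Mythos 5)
Your proposal follows essentially the same route as the paper: the kernel decomposition $D^{k-n}K=H_0+\log|\cdot|\,\mathcal{L}$ with $\mathcal{L}$ the spherical mean of \eqref{eq:mist_cond2}, necessity via the test function $\rho K w$ of Lemma~\ref{lem:rhoKw} (whose $\log$ blow-up survives mollification), and sufficiency by deferring to the elliptic proof of \cite[Thm.~1.3]{Rdiff} with the classical annihilator replaced by the constant-rank one from Lemma~\ref{lem:annihilator}. Your explicit $W_0\oplus W_0^\perp$ splitting and the observation that $\mathcal{A}(D)|_{W_0^\perp}$ is cocanceling correctly unpack what the cited proof does, and you rightly flag that the endpoint bound comes from the cocanceling lemma (\cite[Lem.~3.1]{Rdiff}, a variant of \cite[Lem.~2.2]{BVS}) rather than a naive HLS argument.
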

We can also interpret Theorem~\ref{thm:Linfty} as the limiting case $j=n$ of Theorem~\ref{thm:Lp}\ref{it:Lp-lore} with $q=1$ (see \cite[Sec.~7]{Rdiff}).

The proof of necessity for all three results stems from the following Lemma:
\begin{lemma}\label{lem:rhoKw}
Let $\A(D)$ be a constant rank operator as in \eqref{eq:A}. Let $w\in W$ be such that $w\in\mathrm{im\,}\A(\xi)$ for all $\xi\neq0$ and $\rho\in\hold^\infty_c(B_2)$ be such that $\rho=1$ in $B_1$. Then the locally integrable function $\rho Kw$ can be approximated by $\hold^\infty_c$--functions in the semi--norm $u\mapsto |\A u|(\R^n)$. Here $|\mu|$ denotes the total variation measure of a measure $\mu$, $K$ is such that $\hat{K}=\A^\dagger(\cdot)$, and $B_r\coloneqq B(0,r)$. Moreover,
\begin{align}\label{eq:conv_legit}
\rho Kw-\pi (\rho Kw)=
Kw+\Psi,
\end{align}
where $\Psi\in\hold^\infty(\R^n,V)$.
\end{lemma}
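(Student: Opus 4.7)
My plan is to establish the structural identity (b) via a Fourier-side computation, and then handle (a) by a cutoff regularization whose key estimates reuse the same analysis.

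For (b), I will apply the formal identity $v - \pi v = K \star \A(D) v$ from~\eqref{eq:defs} to $v = \rho Kw$; the main task is to compute $\A(D)(\rho Kw)$. By Leibniz,
\begin{align*}
\A(D)(\rho Kw) = \rho \A(D)(Kw) + R,
\end{align*}
where $R$ gathers every term containing at least one derivative of $\rho$ multiplied by a lower order derivative of $Kw$. Since $Kw$ is the inverse Fourier transform of the smooth homogeneous map $\A^\dagger(\cdot)w$, it belongs to $\hold^\infty(\R^n \setminus \{0\}, V)$; as the derivatives of $\rho$ are supported in $B_2 \setminus B_1$, $R \in \hold^\infty_c(B_2 \setminus B_1, W)$. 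On the Fourier side, $\widehat{Kw}(\xi) = \A^\dagger(\xi)w$ gives (up to the constant from the Fourier convention) $\widehat{\A(D) Kw}(\xi) = \A(\xi)\A^\dagger(\xi)w$; since $\A(\xi)\A^\dagger(\xi)$ is the orthogonal projection onto $\mathrm{im\,}\A(\xi)$, which fixes $w$ by hypothesis, this equals $w$ identically. Hence $\A(D)(Kw) = \delta_0 w$, and since $\rho(0) = 1$,
\begin{align*}
\A(D)(\rho Kw) = \delta_0 w + R.
\end{align*}
Substituting into the identity from~\eqref{eq:defs} yields $\rho Kw - \pi(\rho Kw) = Kw + \Psi$ with $\Psi := K \star R$, and $\Psi \in \hold^\infty(\R^n, V)$ since $R$ is a test function and $K$ is a tempered distribution.

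For (a), I plan to regularize by cutting off the singularity at the origin. Let $\eta_\ep \in \hold^\infty(\R^n, [0,1])$ be radial with $\eta_\ep \equiv 0$ on $B_\ep$, $\eta_\ep \equiv 1$ off $B_{2\ep}$, and $|\D^j \eta_\ep| \leq c\,\ep^{-j}$; set $u_\ep := \eta_\ep \rho Kw$, which lies in $\hold^\infty_c(B_2, V)$ because $Kw \in \hold^\infty(\R^n \setminus \{0\}, V)$ and $\eta_\ep$ annihilates the singular point. Expanding $\A(D) u_\ep$ via Leibniz and noting that the $\delta_0$ contribution in $\A(D)(\rho Kw)$ is killed by $\eta_\ep$, I obtain $\A(D) u_\ep = \eta_\ep R + h_\ep$, where $h_\ep$ consists of the terms with at least one derivative of $\eta_\ep$ and is supported in $B_{2\ep} \setminus B_\ep$, on which $\rho \equiv 1$. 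Combining $|\D^j \eta_\ep| \leq c\,\ep^{-j}$, the homogeneity estimate $|\D^l Kw(x)| \leq c\,|x|^{k-n-l}$ on this annulus, and $|B_{2\ep} \setminus B_\ep| \sim \ep^n$, I conclude $\|h_\ep\|_{\lebe^1} \leq c$ uniformly in $\ep$. Together with $u_\ep \to \rho Kw$ in $\lebe^1$ (by dominated convergence) and the convergence $\A(D) u_\ep \wstar \A(D)(\rho Kw)$ as measures, this delivers the required approximation.

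The main obstacle I anticipate is the regime $k \geq n$, where $Kw$ is no longer strictly $(k-n)$-homogeneous and may contain polynomial or logarithmic corrections. I plan to handle it by replacing $Kw$ throughout by the explicit solution $u_h$ furnished by Lemma~\ref{lem:canc}: since $\D^l u_h$ is $(k-n-l)$-homogeneous for all $l \geq \max\{0, k-n+1\}$, which is precisely the range of derivatives weighted by $\partial^\beta \eta_\ep$ with $|\beta| \geq 1$ that appears in $h_\ep$, the estimates above carry over, while the finitely many lower order derivatives of $u_h$ remain locally bounded near the origin and contribute a negligible error on the annulus.
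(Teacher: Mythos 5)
Your derivation of \eqref{eq:conv_legit} coincides with the paper's: both compute $\A(D)(\rho Kw)=\delta_0 w+$(smooth compactly supported term) via Leibniz and the Fourier identity $\A(\xi)\A^\dagger(\xi)w=w$, then apply \eqref{eq:defs}. For the approximation in part (a), however, you take a genuinely different route, and it has a gap. The paper regularizes $\rho Kw$ by convolution with a standard mollifier $\eta_\varepsilon$, so that $\A(D)[(\rho Kw)\star\eta_\varepsilon]=\eta_\varepsilon w+\psi\star\eta_\varepsilon$; for small $\varepsilon$ the two summands have disjoint supports, and strict convergence $|\A(D) u_\varepsilon|(\R^n)\to|\A(D)(\rho Kw)|(\R^n)$ is immediate, uniformly in $k$ and $n$. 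Your alternative is to cut off the singularity at the origin, and then one must control the Leibniz commutator $h_\varepsilon$. Even in the range $k<n$, a scaling argument shows $\|h_\varepsilon\|_{\lebe^1}$ is essentially constant in $\varepsilon$, and that constant is typically \emph{strictly} larger than $|w|$ (e.g.\ for $\A=\Delta$ on $\R^3$ one computes $\|h_\varepsilon\|_{\lebe^1}=|w|\int_0^\infty s|\zeta''(s)|\,\mathrm{d}s>|w|$ for a radial cutoff $\eta_\varepsilon(x)=\zeta(|x|/\varepsilon)$). So a uniform bound is all you get, not the approximation the lemma asserts; this may still be usable downstream, but it is not what is claimed.

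The more serious problem is $k\geq n$, which is needed for Theorem~\ref{thm:Linfty}. Your commutator $h_\varepsilon$ contains terms $\partial^\beta\eta_\varepsilon\,\partial^{\alpha-\beta}(Kw)$ with $1\leq|\beta|\leq k$, so the order $l=k-|\beta|$ of the derivative of $Kw$ ranges over $\{0,\dots,k-1\}$. Your assertion that the homogeneity range $l\geq\max\{0,k-n+1\}$ from Lemma~\ref{lem:canc} is ``precisely the range that appears'' is incorrect: whenever $k\geq n$ the indices $|\beta|\geq n$ (i.e.\ $l\leq k-n$) do appear, and these low-order derivatives are \emph{not} locally bounded in general. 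Indeed, $D^{k-n}Kw=H_0 w+\log|\cdot|\,\mathcal{L}w$, and when $\mathcal{L}w\neq 0$ (exactly the relevant situation for the necessity half of Theorem~\ref{thm:Linfty}) the $|\beta|=n$ term contributes $\|\partial^\beta\eta_\varepsilon\,D^{k-n}Kw\|_{\lebe^1}\gtrsim\varepsilon^{-n}\log(1/\varepsilon)\cdot\varepsilon^n=\log(1/\varepsilon)\to\infty$. A concrete instance is $\A=\Delta$ on $\R^2$, where $K=\tfrac{1}{2\pi}\log|\cdot|$ and the term $(\Delta\eta_\varepsilon)\,Kw$ already has divergent $\lebe^1$-norm. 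Mollification sidesteps all of this because it never places derivatives of a cutoff against the singularity of $K$; I would switch to that approach.
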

\begin{proof}
We first recall that $\A^\dagger(\cdot)$ is a $(-k)$--homogeneous distribution in $\R^n\setminus\{0\}$, so it extends to a tempered distribution (see the proof of \cite[Lem.~2.1]{BVS}). Moreover, $K\in\hold^\infty(\R^n\setminus\{0\})\cap\lebe^1_{\locc}(\R^n)$ valued in $\lin(W,V)$ and $\A(D)(Kw)=\delta_0w$\footnote{We can take $u_h=Kw$ in Lemma~\ref{lem:canc}.}, where $w$ is as in the statement of the Lemma. This is due to the fact that
\begin{align*}
\mathscr{F}[{\A(D)(Kw)}](\xi)=\A(\xi)\A^\dagger(\xi)w=w,
\end{align*}
where the last equality follows from the fact that $MM^\dagger$ is the orthogonal projection onto $\mathrm{im\,}M$. By the chain rule and $\rho(0)=1$, we have that
\begin{align*}
\A(D)(\rho Kw)= \delta_0w+\psi,
\end{align*}
where $\psi\in\hold^\infty_c(B_{5/2}\setminus B_{1/2},W)$, so that $\A(D)(\rho Kw)$ is a compactly supported measure.

Let now $\eta_\varepsilon$ be a sequence of radially symmetric, positive (standard) mollifiers. Then $(\rho KW)\star\eta_\varepsilon\in\hold_c^\infty(\R^n,V)$ and 
\begin{align*}
\A(D)[(\rho Kw)\star\eta_\varepsilon]=[\A(D)(\rho K w)]\star\eta_\varepsilon=\eta_\varepsilon w+\psi\star\eta_\varepsilon.
\end{align*}
For small $\varepsilon>0$, the two smooth terms above are disjointly supported, so
\begin{align*}
\int_{\R^n}|\A(D)[(\rho K w)\star\eta_\varepsilon|\dif x&= \int_{B_{1/2}}|\eta_\varepsilon w|\dif x+\int_{B_{5/2}\setminus B_{1/2}}|\psi\star\eta_\varepsilon|\dif x\\
&\rightarrow |w|+\|\psi\|_{\lebe^1}=|\A(D)(\rho K w)|(\R^n)\qquad\text{as }\varepsilon\downarrow0.
\end{align*}
Furthermore, by \eqref{eq:defs}, we have that
\begin{align*}
\rho Kw-\pi (\rho Kw)=K\star [\A(D)(\rho K w)]=K\star(\delta_0w+\psi),
\end{align*}
and \eqref{eq:conv_legit} follows with $\Psi\coloneqq K\star\psi$.
\end{proof}
As a minor remark, it is well--known that mollifications of bounded measures converge strictly, but we presented an elementary proof in this case, as it was readily available.

Using Lemma~\ref{lem:rhoKw}, which essentially shows that $u=\rho Kw$ is admissible in all estimates investigated, we can give a slightly different proof of \cite[Prop.~5.5,~8.23]{VS}, and \cite[Prop.~3.1]{BVS}, which is contained in the following:
\begin{proof}[Proof of necessity of algebraic assumptions for Theorems~\ref{thm:Lp},~\ref{thm:hardy},~\ref{thm:Linfty}]
We begin by assuming that $\A(D)$ is not canceling and show that the embedding in \ref{it:Lp-lore} fails.

Let $u\coloneqq \rho Kw$ be as in Lemma \ref{lem:rhoKw}, $1\leq j\leq\min\{n-1,k\}$, $1<q<\infty$. By \eqref{eq:conv_legit}, it suffices to show that $D^{k-j}Kw$ is not in $\lebe^{n/(n-j),q}$ in a neigbourhood of zero.

Essentially by Lemma~\ref{lem:canc} (or \cite[Lem.~2.1]{BVS}), we have that $D^{k-j}K$ is $(j-n)$--homogeneous, and so is $D^{k-j}Kw\not\equiv0$. Since $D^{k-j}Kw$ is continuous on $\mathbb{S}^{n-1}$, it follows that there exists a constant $c_0>0$ such that
\begin{align*}
|D^{k-j}K(x)w|\geq c_0|x|^{j-n}\qquad\text{ for all }x\in\mathcal{C},
\end{align*}
where the set $\mathcal{C}=\{x\in\R^n\colon x_1^2+x_2^2+\ldots+x_{n-1}^2<ax_n^2\}$ for some $a>0$ is a cone (represented in a new coordinate system). We can then have, with $B\coloneqq B_1$,
\begin{align*}
\|D^{k-j}Kw\|^{q}_{\lebe^{n/(n-j),q}(B)}&=\int_0^{\infty}\left[\lambda^{\frac{n}{n-j}}\mathscr{L}^n(\{x\in B\colon|D^{k-j}K(x)w|>\lambda\})\right]^{\frac{q(n-j)}{n}}\dfrac{\dif \lambda}{\lambda}\\
&\geq \int_0^{c_0}\left[\lambda^{\frac{n}{n-j}}\mathscr{L}^n(\{x\in B\cap\mathcal{C}\colon |x|<(\lambda/c_0)^{\frac{1}{j-n}}\})\right]^{\frac{q(n-j)}{n}}\dfrac{\dif \lambda}{\lambda}\\
&=\infty,
\end{align*}
since $\mathscr{L}^n({B_r\cap \mathcal{C}})\sim r^n$. This proves necessity of cancellation for Theorem~\ref{thm:Lp}\ref{it:Lp-lore},~\ref{it:Lp-sobo}.

The necessity of cancellation for Theorem~\ref{thm:hardy} follows almost along the same lines, by computing
\begin{align*}
\int_{B}\dfrac{|D^{k-j}K(x)w|^q}{|x|^{n-q(n-j)}}\dif x\geq c_0\int_{B\cap\mathcal{C}}\dfrac{|x|^{q(j-n)}}{|x|^{n-q(n-j)}}\dif x=\infty.
\end{align*}

Necessity of cancellation for Theorem~\ref{thm:Lp}\ref{it:Lp-frac},~\ref{it:Lp-beso},~\ref{it:Lp-trli} follows from the arguments in, respectively, \cite[Prop.~8.19-21]{VS}.

Finally, we prove necessity of Condition~\eqref{eq:mist_cond2} for Theorem~\ref{thm:Linfty}. Let $k\geq n$. Recall from \cite[Sec.~3]{Rdiff} that $\A^\dagger(\cdot)$ extends to a tempered distribution that we do not relabel for which
\begin{align*}
D^{k-n}\mathscr{F}^{-1}[\A^\dagger(\cdot)]\mathbf{w}&=H_0\mathbf{w}+\log|\cdot|\int_{\mathbb{S}^{n-1}}\A^\dagger(\xi)\mathbf{w}\otimes\xi^{\otimes (k-n)}\dif\mathscr{H}^{n-1}(\xi)\\
&\eqqcolon H_0\mathbf{w}+\log|\cdot|\mathcal{L}\mathbf{w}\qquad\text{ for }\mathbf{w}\in W,
\end{align*}
where $H_0\in\hold^\infty(\R^n\setminus\{0\})$ is a tensor--valued zero--homogeneous function and $\mathcal{L}$ is a linear map between finite dimensional spaces. Therefore, we can write $D^{k-n}K=H_0+\log|\cdot|\mathcal{L}$.

We now assume that Condition~\eqref{eq:mist_cond2} fails, so that there exists $w\in W$ such that $w\in\mathrm{im\,}\A(\xi)$ for all $\xi\neq0$ and, in addition, $\mathcal{L}w\neq0$. For small $|x|$ we have
\begin{align*}
|D^{k-n}K(x)w|&\geq \left||H_0(x)w|+\log|x|\mathcal{L}w\right|\\
&\geq -\log|x||\mathcal L w|-\|H_0\|_\infty|w|\rightarrow\infty\quad\text{ as }x\rightarrow0.
\end{align*}
By Lemma~\ref{lem:rhoKw}, it follows that the embedding fails for $u=\rho Kw$.
\end{proof}
\begin{proof}[Proof of sufficiency of algebraic assumptions for Theorems~\ref{thm:Lp},~\ref{thm:hardy},~\ref{thm:Linfty}]
We first prove that the embedding in \ref{it:Lp-beso} holds. This includes sufficiency of cancellation for \ref{it:Lp-frac} (by choosing $q=n/(n-s)$) and for \ref{it:Lp-sobo} (by further choosing $s=j$).

The proof follows as in \cite[Sec.~8]{VS}. We write $f\coloneqq\A(D)u$. Then, by homogeneity of $\A(D)$, we have that
\begin{align*}
[\mathscr{F}(u-\pi u)](\xi)=\A^\dagger(\xi)\hat{f}(\xi)=\A^\dagger\left(\frac{\xi}{|\xi|}\right)|\xi|^{-k}\hat{f}(\xi)=\A^\dagger\left(\frac{\xi}{|\xi|}\right)\widehat{I_k f}(\xi),
\end{align*}
where $I_k$ denotes the $k$-th homogeneous Riesz potential. By \cite[Thm.~5.2.2-3]{Triebel},
\begin{align*}
\|u-\pi u\|_{\dot{\mathrm{B}}{_q^{k-s,n/(n-s)}}}\leq c\|I_kf\|_{\dot{\mathrm{B}}{_q^{k-s,n/(n-s)}}}\leq c\|f\|_{\dot{\mathrm{B}}{_q^{-s,n/(n-s)}}} =c\|\A(D)u\|_{(\dot{\mathrm{B}}{_q^{s,n/s}})^*}.
\end{align*}
One then employs Lemma~\ref{lem:annihilator} and \cite[Prop.~8.8]{VS} with $L(D)=\mathcal{A}(D)$ and $f=\A(D)u$ to conclude. The proof is similar in the case \ref{it:Lp-trli} of Triebel--Lizorkin spaces, using \cite[Prop.~8.7]{VS} instead of \cite[Prop.~8.8]{VS}.

As for Theorem~\ref{thm:Lp}\ref{it:Lp-lore}, we start with the case $j=1$ and use boundedness of singular integrals between Lorentz spaces and \cite[Prop.~8.10]{VS} (as above, for $f=\A(D)u$ and $L(D)=\mathcal{A}(D)$ with $\mathcal{A}(D)$ given by Lemma~\ref{lem:annihilator}) to get
\begin{align*}
\|D^{k-1}(u-\pi u)\|_{\lebe^{n/(n-1),q}}&=\|D^{k-1}K\star \A(D)u\|_{\lebe^{n/(n-j),q}}\leq c\|\A(D)u\|_{\dot{\sobo}{^{-1}\lebe^{n/(n-j),q}}}\\
&=c\|\A(D)u\|_{\left(\dot{\sobo}{^{1}\lebe^{n,q/(q-1)}}\right)^*}\leq c\|\A(D)u\|_{\lebe^1}.
\end{align*}
The case when $j=2,3,\ldots,\min\{n-1,k\}$ follows by an iteration of 
\cite[Prop.~8]{Tartar_lorentz}.

The proof of Theorem~\ref{thm:hardy} is completed by inserting $G=K$ in the proof of \cite[Prop.~2.3]{BVS}, which then requires that \cite[Prop.~2.2]{BVS} holds for constant rank and canceling operators. This is true and can be seen by replacing $L(D)$ with $\mathcal{A}(D)$ as given by Lemma~\ref{lem:annihilator} in the proof of \cite[Prop.~2.2]{BVS}. Similarly, the proof of Theorem~\ref{thm:Linfty} follows from the proof of \cite[Thm.~1.3]{Rdiff}, noting that \cite[Lem.~3.1]{Rdiff} (which is a minor modification of \cite[Lem.~2.2]{BVS}) extends to constant rank operators by Lemma~\ref{lem:annihilator}.
\end{proof}
\subsection{Concluding remarks}
\begin{remark}[Non--constant rank operators]\label{rk:non-CR}
We recall from \cite[Thm.~1]{R} that if \eqref{eq:exact}, holds, then both differential operators $\A(D)$ and $\mathcal{A}(D)$ are of constant rank. In particular, if $\A(D)$ does not have constant--rank, then one cannot argue by duality using estimates for cocanceling operators. Even if one relaxes \eqref{eq:exact} to
\begin{align*}
\ker\mathcal{A}(\xi)\subset\mathrm{im\,}\A(\xi)\qquad\text{ for }\xi\neq0,
\end{align*}
in which case cancellation of $\A(D)$ implies cocancellation of $\mathcal{A}(D)$, one still cannot use the Fourier multiplier techniques to prove the inequalities of this note. This is due to the fact that $0\neq\xi\mapsto\A^\dagger(\xi)$ is necessarily discontinuous. 

In particular, if one dispenses of the assumption that $\A(D)$ is canceling, inequalities for non--eliptic operators may become available, but the methods have to be new. This is the case for \cite[Prop.~5.4]{VS} and the family of pure Hardy--type inequalities in \cite[Sec.~5]{BVS}, which are derived by elementary methods. On the other hand, by Remark~\ref{rk:proj} below, whenever such inequalities hold for non--eliptic operators, these do not have constant rank.
\end{remark}
\begin{remark}[Indispensability of the projection]\label{rk:proj}
If $\A(D)$ is not elliptic but of constant rank, one cannot dispense of the projection $\pi$ defined in \eqref{eq:defs} in any inequality from Section~\ref{sec:statements(for_real)}. In other words, inequalities as in \eqref{eq:VS} cannot hold for constant rank, non--elliptic operators. This so, since by \cite[Thm.~1]{R}, there exists a differential operator $\mathfrak{B}(D)\not\equiv0$ such that $\mathrm{im\,}\mathfrak{B}(\xi)=\ker\A[\xi]\neq 0$ for all $\xi\neq0$. Plugging $u\coloneqq\mathfrak{B}(D)\psi$ for $0\neq\psi\in\hold^\infty_c$ in \eqref{eq:VS} we obtain
\begin{align*}
\|D^{k-1}\mathfrak{B}(D)\psi\|_{\lebe^{n/(n-1)}}\leq c\|\A(D)\circ\mathfrak{B}(D)\psi\|_{\lebe^1}=0,
\end{align*}
which is an obvious contradiction. In particular, if \cite[Prop.~8.1]{VS} would be concluded in the positive, a counter--example should fail the constant rank condition. The same is true if \cite[Open~Prob.~5.1]{VS} would be concluded in the negative.

Thus, the inequalities in Section~\ref{sec:statements(for_real)} can also be intepreted as follows: \emph{If $\A(D)$ has constant rank, among all solutions $u$ of $\A(D)u=f\in\mathcal{M}$, there is one that has the appropriate regularity; moreover the statement is optimal if $\A(D)$ is non--elliptic}. Interestingly, for the operators in \cite[Prop.~5.4]{VS}, \cite[Sec.~5]{BVS}, which necessarily have non--constant rank whenever non--elliptic, one obtains critical regularity for \emph{all} solutions $u$.
\end{remark}
\begin{remark}[Cocanceling estimates from canceling estimates]\label{rk:duality}
Our proof of sufficiency of cancellation for Theorem~\ref{thm:Lp} uses the method from \cite{VS} to derive ``primal'' estimates for canceling operators from ``dual'' estimates for cocanceling operators, by which we mean
\begin{align*}
\|K\star\A u\|_X\leq c\|\A u\|_{Y}\leq c\|\A u\|_{\lebe^1}\qquad\text{ for }u\in\hold^\infty_c(\R^n,V),
\end{align*}
where $K$ is as in \eqref{eq:defs}, $X$ is as in the statement of Theorem~\ref{thm:Lp}, and $Y$ is given by the elliptic estimates used in the proof. In the present context, one can also derive the ``dual'' estimates from ``primal'' estimates. We give an example to illustrate this, namely when $X=\dot{\sobo}{^{k-1,n/(n-1)}}$, so that $Y=({\dot{\sobo}}{^{1,n}})^*$. 

Suppose that we are given operators $\A(D)$ and $\mathcal{A}(D)$ such that the exact relation \eqref{eq:exact} holds (hence both have constant rank by Remark~\ref{rk:non-CR}) such that $\mathcal{A}(D)$ is cocanceling, so $\A(D)$ is canceling. Let $w\in\hold^\infty_c$ be such that $\mathcal{A}(D)w=0$. By \eqref{eq:exact}, we have that $u\coloneqq K\star w\in\mathscr{S}$ is such that $\A[D]u=w$ (see also \cite[Lem.~5]{R}). Assuming that Theorem~\ref{thm:Lp}\ref{it:Lp-sobo} holds, we have that
\begin{align*}
\|w\|_Y&=\|\A(D) u\|_{({\dot{\sobo}}{^{1,n}})^*}\leq c\|D^{k-1}(K\star\A(D)u)\|_{\lebe^{n/(n-1)}}\\
&\leq c\|\A(D)u\|_{\lebe^1}=c\|w\|_{\lebe^1},
\end{align*}
where the second inequality follows by definition chasing and integration by parts.
\end{remark}
We conclude this note with two typical examples of constant rank, non--elliptic operators: $\A(D)=\curl$ and $\A(D)=\di$:
\begin{example}
Let $\A(D)=\curl$ on $\R^n$, $n\geq2$. Then $\A(D)$ is canceling and the estimates in Theorems~\ref{thm:Lp} and \ref{thm:hardy} hold. Moreover, one can use the inequality
\begin{align}\label{eq:curl_est}
\|u-\pi u\|_{\lebe^{n/(n-1)}}\leq c\|\curl u\|_{\lebe^1}\qquad\text{ for }u\in\hold^\infty_c(\R^{n},\R^n)
\end{align}
to prove that
\begin{align}\label{eq:BB_est}
\|u\|_{\lebe^{n/(n-1)}}\leq c\|\curl u\|_{\lebe^1}\quad\text{ for }u\in\hold^\infty_c(\R^{n},\R^n)\text{ with }\di u=0.
\end{align}
In this case, we recover \cite[Thm.~2]{BB04}, which states that \eqref{eq:BB_est} holds for $n=3$, but the approach we give is not new, as the proof below is essentially given in \cite{VS0}.
\end{example}
\begin{proof}[Proof of \eqref{eq:BB_est} using \eqref{eq:curl_est}]
By definition of $\pi$ (see \eqref{eq:defs} and the lines above), we have
\begin{align*}
u=\pi u+(u-\pi u)\eqqcolon \nabla\phi+\psi
\end{align*}
is an $\lebe^2$--Helmholtz decomposition of $u\in\hold^\infty_c(\R^n,\R^n)$ for unique $\phi\in\mathscr{S}(\R^n)$, $\psi\in\mathscr{S}(\R^n,\R^n)$ with $\di \psi=0$. If we assume that $\di u=0$, it follows that $\Delta \phi=0$ (e.g., by \cite[Lem.~2.1]{BVS}), so that $u-\pi u=u$. The proof is complete.
\end{proof}
\begin{example}
Let $\A(D)=\di$ on $\R^n$, $n\geq1$. Then $\A(D)$ is not canceling and the estimates in Theorems~\ref{thm:Lp} and \ref{thm:hardy} fail (cp.~\cite[Rk.~5]{BB07}). If $n=1$, Condition~\eqref{eq:mist_cond2} trivially holds, so the estimate of Theorem~\ref{thm:Linfty} holds, but this is not very interesting as the inequality follows by the Fundamental Theorem of Calculus anyway.
\end{example}
\section{Appendix: Weak--type estimates}
We briefly discuss the remaining limiting cases $q\in\{0,\infty\}$ of Theorem~\ref{thm:Lp}\ref{it:Lp-beso}, \ref{it:Lp-lore}. If $q=1$, the problem is open (cp. \cite[Open Prob.~8.2-3]{VS}). If $q=\infty$, one obtains, by analogy with \cite[Prop.~8.22,~8.24]{VS} and \cite[Prop.~7.5]{Rdiff}:
\begin{proposition}
Let $\A(D)$ be a constant rank operator as in \eqref{eq:A}, $n\geq 1$, $s\in (0,n)$, and $1\leq j\leq\min\{n,k\}$. We have that
\begin{align*}
\|D^{k-1}(u-\pi u)\|_{\lebe^{n/(n-j),\infty}}&\leq c\|\A u\|_{\lebe^1}\\
\|u-\pi u\|_{\dot{\mathrm{B}}{_\infty^{k-s,n/(n-s)}}}&\leq c\|\A u\|_{\lebe^1}
\end{align*}
for all $u\in\hold^\infty_c(\R^n,V)$.
\end{proposition}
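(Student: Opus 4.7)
The plan is to run the strategy of the proof of Theorem~\ref{thm:Lp}\ref{it:Lp-beso},~\ref{it:Lp-lore} essentially verbatim, but to replace the cocanceling step (which invoked \cite[Prop.~8.7,~8.8,~8.10]{VS}) by classical endpoint embeddings that require no algebraic hypothesis. This is precisely the mechanism underlying \cite[Prop.~8.22,~8.24]{VS} and \cite[Prop.~7.5]{Rdiff}, and it explains why the statement does not impose cancellation on $\A(D)$.

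I would start from \eqref{eq:defs}, writing $u-\pi u=K\star\A(D)u$ with $\hat{K}=\A^{\dagger}(\cdot)$. Since $\A(D)$ has constant rank, $\A^{\dagger}\in\hold^\infty(\R^n\setminus\{0\},\lin(W,V))$ is $(-k)$-homogeneous, so for any multi-index with $|\alpha|=k-j$,
\begin{align*}
\mathscr{F}[D^\alpha(u-\pi u)](\xi)=(\imag\xi)^\alpha\A^{\dagger}(\xi)\widehat{\A(D)u}(\xi)=m_\alpha\!\left(\xi/|\xi|\right)|\xi|^{-j}\widehat{\A(D)u}(\xi),
\end{align*}
with $m_\alpha\in\hold^\infty(\mathbb{S}^{n-1},\lin(W,V))$ the symbol obtained after extracting the $(-j)$-homogeneous part. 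Equivalently, $D^{k-j}(u-\pi u)=T\circ I_j[\A(D)u]$, where $I_j$ is the Riesz potential of order $j$ and $T$ is a zero-order Calder\'on--Zygmund operator; analogously, $u-\pi u=T'\circ I_k[\A(D)u]$ is the version used for the Besov estimate.

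For the Lorentz bound I would combine the weak endpoint of the Hardy--Littlewood--Sobolev embedding, $I_j\colon\lebe^1\to\lebe^{n/(n-j),\infty}$, with the boundedness of $T$ on $\lebe^{n/(n-j),\infty}$; the latter follows from Marcinkiewicz interpolation between the weak-$(1,1)$ endpoint of Calder\'on--Zygmund theory and the strong $\lebe^p$-bounds for $1<p<\infty$. For the Besov bound I would chain three standard facts: the elementary endpoint embedding $\lebe^1\hookrightarrow\dot{\mathrm{B}}^{-s,n/(n-s)}_\infty$ valid for $s\in(0,n)$; the Riesz-potential lifting $I_k\colon\dot{\mathrm{B}}^{-s,n/(n-s)}_\infty\to\dot{\mathrm{B}}^{k-s,n/(n-s)}_\infty$; and the boundedness of the zero-order multiplier $T'$ on $\dot{\mathrm{B}}^{k-s,n/(n-s)}_\infty$, both available from \cite[Thms.~2.3.7,~5.2.3]{Triebel}.

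The only delicate point is verifying that the symbols $m_\alpha$ satisfy a H\"ormander--Mikhlin-type condition adequate for both scales. This is routine: by \cite[Rk.~3]{R}, $\A^{\dagger}$ is a rational function smooth on $\R^n\setminus\{0\}$, so $m_\alpha\in\hold^\infty(\mathbb{S}^{n-1})$, and bounds for derivatives of all orders follow from zero-homogeneity; the relevant multiplier theorems then apply verbatim. In particular, no algebraic condition on $\A(D)$ surfaces, in agreement with the fact that weak-type endpoints cannot be improved by cancellation.
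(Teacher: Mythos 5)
Your factorization $D^{k-j}(u-\pi u)=T\circ I_j[\A(D)u]$ (resp.\ $u-\pi u=T'\circ I_k[\A(D)u]$), with $T,T'$ zero-order multipliers smooth and homogeneous away from the origin by the constant rank hypothesis, followed by the unconditional endpoint embeddings of $\lebe^1$ (weak Hardy--Littlewood--Sobolev for the Lorentz scale, $\lebe^1\hookrightarrow\dot{\mathrm{B}}{}^{-s,n/(n-s)}_\infty$ for the Besov scale) is exactly the mechanism behind \cite[Prop.~8.22,~8.24]{VS} and is the proof the paper intends but does not write out; you also correctly identify why no cancellation hypothesis appears at $q=\infty$, and you have rightly read the first display with $D^{k-j}$ rather than the (typographical) $D^{k-1}$.

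The one genuine gap is the endpoint $j=n$, which the statement permits whenever $k\geq n$ and which is precisely why the paper appends the convention that $\lebe^{\infty,\infty}$ denotes weak--$\lebe^\infty$ in the sense of \cite{BDVS} (this is the case covered by \cite[Prop.~7.5]{Rdiff}, the third reference in the paper's ``by analogy with''). There both of your ingredients fail as stated: $I_n$ is not a Hardy--Littlewood--Sobolev weak endpoint (its kernel is logarithmic rather than a negative power of $|x|$), and Marcinkiewicz interpolation between weak-$(1,1)$ and strong $(p,p)$ bounds yields nothing at $p=\infty$, where Calder\'on--Zygmund operators are in general unbounded. For $j=n$ one must instead use the decomposition $D^{k-n}K=H_0+\log|\cdot|\,\mathcal{L}$, with $H_0$ zero--homogeneous and bounded on the sphere, recalled in the necessity part of the proof of Theorem~\ref{thm:Linfty}, and then invoke the weak--$\lebe^\infty$/$\bmo$--type mapping properties of such kernels on $\lebe^1$ data as in \cite[Sec.~7]{Rdiff}. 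For $1\leq j\leq\min\{n-1,k\}$ and for the Besov estimate your argument is complete.
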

If $j=n$, we write $\lebe^{\infty,\infty}$ for the space weak--$\lebe^\infty$, as defined in \cite{BDVS}.

\end{document}